\definecolor{myurlcolor}{rgb}{0,0,0.4}
\definecolor{mycitecolor}{rgb}{0,0.5,0}
\definecolor{myrefcolor}{rgb}{0.5,0,0}
\theoremstyle{plain}
\newtheorem{theorem}{Theorem}[section]
\newtheorem{corollary}[theorem]{Corollary}
\theoremstyle{remark}
\newtheorem{remark}[theorem]{Remark}
\theoremstyle{definition}
\newtheorem{definition}[theorem]{Definition}
\newtheorem{example}[theorem]{Example}
\title{A note on polynomial equations over algebras}
\author{Maximilian Illmer and Tim Netzer}
\date{\today}                                           
\begin{document}
\begin{abstract}We provide sufficient conditions for systems of polynomial equations over general (real or complex) algebras to have a solution. This generalizes known results on quaternions, octonions and matrix algebras. We also generalize the fundamental theorem of algebra for quaternions to polynomials with two monomials in the leading form, while showing that  it fails for three.
\end{abstract}
\maketitle


\section{Introduction} By the fundamental theorem of algebra,  every non-constant polynomial has a zero over $\mathbb C$. Over the reals, only polynomials of odd degree are guaranteed to have  zeros, by the intermediate value theorem. But how about other real or complex algebras? 

A  beautiful result for quaternions and octonions was proven in \cite{eil}, see also \cite{roo}: Every  non-constant polynomial, whose leading form consists of only one monomial, has a zero. So in this regard, quaternions/octonions behave  like complex numbers, although they are  non-commutative/even non-associative. Another such result is \cite{neth}: Every odd-degree Hermitian polynomial over the complex matrix algebra, whose leading form is non-degenerate, has a zero in Hermitian matrices. This resembles exactly the behavior of real numbers.

In this note, we  study and  extend these results to general algebras over algebraically closed and real closed fields, and to systems of multivariate polynomial equations.

First, for algebras over an algebraically closed field, \Cref{thm_complex} gives a sufficient condition for a system of non-constant polynomial equations to have a solution. The two crucial requirements are non-degenerateness of the leading form, and some finite-dimensionality condition on the polynomials. Both conditions are necessary for the result to hold. In the proof we reduce the problem to a suitable system of classical polynomial equations, whose solvability is well-known.

Second, we prove the same result for algebras over real closed fields, given that the polynomials are of odd degree (\Cref{thm_real}). Here we give two different proofs. One reduces the problem again to classical polynomials over the ground field, the other uses methods from algebraic topology as in  \cite{eil, neth} and one of the standard proofs of the fundamental theorem of algebra. This proof is a simplification of the one in \cite{neth}, and proves a much more general result. In particular, it also applies to quaternions and octonions, and yields a significant generalization of \cite{eil} in case of odd degree.

Finally, we consider quaternions and octonions  in more detail.  For even degree polynomials, we  extend the results from \cite{eil,roo} to polynomials with two monomials in the leading form. Maybe surprisingly, we then show that the result fails for leading forms of three monomials. So there are indeed  (non-degenerate) polynomials over quaternions/octonions without a zero.

This paper is structured as follows. In \Cref{sec:prel} we explain the setup of polynomials over algebras. In \Cref{sec:main} we prove the general results   \Cref{thm_complex} and \Cref{thm_real}. \Cref{sec:quat}  contains the results about quaternions and octonions.

\section{Polynomials over algebras}\label{sec:prel}

Let $\mathcal A$ be an algebra over the infinite field $k$. By definition, this means that $\mathcal A$ is a $k$-vector space, equipped with a $k$-bilinear map $\mathcal A\times\mathcal A\to\mathcal A$, called the {\it multiplication} of $\mathcal A$. We do {\it not} assume the multiplication to be associative or commutative, nor do we assume the existence of a unit $1$.  For the product of two elements $a,b\in\mathcal A$ we will just write $ab$. In this setup we now define (multivariate) polynomial maps.
\begin{definition}
 ($i$) {\it Monomial maps} are defined recursively:
\begin{itemize}
\item All constant maps $\mathcal A\to\mathcal A$ are monomial.
\item  For all $n\geqslant 1$ and $i=1,\ldots, n$, the projection $\pi_i\colon\mathcal A^n\to \mathcal A$ to the $i$-th component is monomial (note that this includes the identity ${\rm id_{\mathcal A}}\colon\mathcal A\to\mathcal A)$.
\item The multiplication $\mathcal A\times\mathcal A\to\mathcal A$ is monomial.
\item If $p_1,\ldots, p_m\colon \mathcal A^n\to\mathcal A$ and $q\colon\mathcal A^m\to \mathcal A$ are monomial, then so is 
\begin{align*}
q(p_1,\ldots, p_m)\colon \mathcal A^n&\to\mathcal A \\ a&\mapsto q(p_1(a),\ldots, p_m(a)).
\end{align*}
\end{itemize}
 
($ii$)  A {\it polynomial map}  is a finite $k$-linear combination of monomial maps (inside the $k$-vector space of maps from $\mathcal A^n$ to $\mathcal A$).

($iii$) A {\it zero} of a map $p\colon\mathcal A^n\to \mathcal A$ is an element $a\in\mathcal A^n$ with $p(a)=0_{\mathcal A}$. 
\end{definition}

\begin{remark}
($i$) Monomial maps can also be defined as follows. A {\it non-associative word} is a finite string of variables and elements from $\mathcal A$, but equipped with a sensible bracketing. Sensible in this context means  that it possible to actually compute the expression, whenever the variables are replaced by algebra elements. One example is $$ (xy)(((ax)(yb))x)$$ where $a,b\in \mathcal A$ and $x,y$ are variables. Plugging in elements for the variables then defines a map, and the so-defined maps are precisely the  monomial maps in our above sense. Since the notion of a non-associative word is pretty technical to define exactly, and also different words can define the same map, we will stick to the above definition  of  monomial and polynomial maps.
 
($ii$) If $\mathcal A$ is associative, monomial maps become easier to express. One can  use a word composed of variables and algebra elements without brackets. If the algebra is even unital, each monomial map has a representation as $$(a_1,\ldots, a_n)\mapsto c_0a_{i_1}c_1a_{i_2}c_2\cdots c_{d-1}a_{i_d}c_{d}$$ with coefficients $c_j\in\mathcal A$. If the algebra is not unital, one also has to  allow   $a_i$'s without separation by  coefficients $c_i$.

($iii$) If $\mathcal A$ is associative and commutative, monomial maps take the classical form $$(a_1,\ldots, a_n)\mapsto c a_1^{d_1}\cdots a_n^{d_n}$$ with $c\in\mathcal A$. Again, if $\mathcal A$ is not unital, one also has to allow the same rule without a coefficient $c$ in front.
\end{remark}

\begin{definition}
Using that $k$ is infinite, its easy to see that  nonzero monomial maps  are {\it homogeneous} of a unique {\it degree} $d$, i.e.\ fulfill $$p(\lambda a)=\lambda^dp(a)$$ for all $\lambda\in k, a\in\mathcal A^n.$ In fact they are even {\it multi-homogeneous}, i.e.\ homogeneous with respect to every single argument separately. The zero map is defined to be homogeneous of every degree. Every polynomial map is a unique finite sum of nonzero homogeneous polynomial maps of different degrees, called its {\it homogeneous parts} or {\it forms}. The degree of a polynomial map is the largest degree among its homogeneous parts, and its {\it leading form} is the  homogeneous part of largest degree. 
\end{definition}

\begin{example}
A polynomial map in one variable over an associative algebra $\mathcal A$  is for example given by $$a\mapsto c_1ac_2ac_3 +c_4ac_5ac_6 + c_7a + ac_8 +c_9$$ where all $c_i\in \mathcal A.$  The first two terms constitute the leading form, i.e.\ the homogeneous part of degree $2$, the third and fourth term are the homogeneous part of degree $1$, and $c_9$ is the homogeneous part of degree $0$.   In two variables, one example of a polynomial map is $$(a,b)\mapsto c_1ac_2bc_3ac_4 + bc_5b + ac_6b + c_7.$$
\end{example}

\begin{remark}
Let   $p\colon \mathcal A^n\to\mathcal A$ be  a polynomial map. It is easily seen along the recursive definition, that  for any finite-dimensional subspace  $\mathcal H\subseteq \mathcal A,$  there exists another finite-dimensional subspace $\mathcal H'\subseteq\mathcal A$ with $p(\mathcal H^n)\subseteq \mathcal H'.$ 
Also, if   $b_1,\ldots, b_d$ is a basis of $\mathcal H$ and $b_1',\ldots, b_e'$ is a basis of $\mathcal H'$, then 
$$p(a)=\sum_{j=1}^eh_j(a)b_j'$$ for all $a\in\mathcal H^n$, where the $h_j$ are classical polynomials over $k$ in the variables $\lambda_{k\ell}$, when $a$ is expressed in the basis as follows: $$\mathcal H^n\ni a=(a_1,\ldots, a_n)=\left(\sum_{k=1}^d \lambda_{k1}b_k, \ldots, \sum_{k=1}^d \lambda_{kn}b_k\right).$$ The homogeneous parts of $p$ correspond to the homogeneous parts of the $h_j$ of the same degree. 
\end{remark}


\section{Solutions to polynomial equations}\label{sec:main}

The following is our main result on solutions of polynomial equations over algebraically closed fields.
\begin{theorem}\label{thm_complex} Let  $\mathcal A$ be an algebra over the algebraically closed field $k$, and let $$p_1,\ldots, p_n\colon\mathcal A^n\to\mathcal A$$ be  polynomial maps of positive degree.
Assume $\mathcal H\subseteq\mathcal A$ is a finite-dimensional subspace, such that  the leading forms of the $p_i$ are non-degenerate on $\mathcal H$ (i.e.\ they have no \emph{common} zero in $\mathcal H^n\setminus\{0\}$). Further assume there exists another subspace $\mathcal H'\subseteq\mathcal A$ with $\dim(\mathcal H')\leqslant \dim(\mathcal H)$ and $$p_i(\mathcal H^n)\subseteq \mathcal H' \quad\mbox{for } i=1,\ldots, n.$$ Then $p_1,\ldots, p_n$ have a common zero in $\mathcal H^n$.\end{theorem}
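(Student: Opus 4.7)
The plan is to reduce the problem to a system of classical scalar polynomial equations over $k$, whose solvability follows from the projective dimension theorem. First, I would fix a basis $b_1, \ldots, b_d$ of $\mathcal H$ and a basis $b_1', \ldots, b_e'$ of $\mathcal H'$ (with $e \leqslant d$), and use the remark preceding the theorem to write, for $a=(a_1,\ldots,a_n)\in\mathcal H^n$ with $a_\ell=\sum_k \lambda_{k\ell}b_k$,
$$p_i(a)=\sum_{j=1}^e h_{ij}(\lambda)\,b_j'$$
where $h_{ij}\in k[\lambda_{k\ell}]$ are classical polynomials in $nd$ variables. Since the $b_j'$ are $k$-linearly independent, a common zero of the $p_i$ in $\mathcal H^n$ is the same as a common zero of the $ne$ scalar polynomials $h_{ij}$ in $k^{nd}$. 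Moreover, by the homogeneity statement in the same remark, the top-degree homogeneous part of $h_{ij}$, namely the degree-$d_i$ piece $h_{ij}^{(d_i)}$ with $d_i=\deg P_i$, encodes exactly the restriction of the leading form $P_i$ to $\mathcal H^n$.

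Next, I would homogenize: introducing an extra variable $x_0$, set $\tilde h_{ij}(x_0,\lambda)=x_0^{d_i}h_{ij}(\lambda/x_0)$, a homogeneous polynomial of degree $d_i$ in $nd+1$ variables. These cut out $ne$ projective hypersurfaces in $\mathbb P^{nd}$. The projective dimension theorem (Krull's Hauptidealsatz in projective form) says that any $N$ hypersurfaces in $\mathbb P^N$ have nonempty common intersection; since $ne\leqslant nd$, the intersection here is nonempty, and I may pick a projective common zero $(x_0:\lambda_1:\cdots:\lambda_{nd})$.

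The non-degenerateness hypothesis now enters precisely to exclude points at infinity. If $x_0=0$ at such a projective common zero, then $\tilde h_{ij}(0,\lambda)=h_{ij}^{(d_i)}(\lambda)=0$ for all $i,j$, so $P_i(a(\lambda))=0$ for all $i$, with $a(\lambda)\in\mathcal H^n$ nonzero since $\lambda$ represents a projective point. This contradicts the assumption on the leading forms. Hence $x_0\neq 0$, and dehomogenizing produces an affine solution $\lambda/x_0\in k^{nd}$ of the $h_{ij}$ system, which corresponds to the desired common zero in $\mathcal H^n$.

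The main obstacle I expect is bookkeeping rather than substance: verifying that the top-degree parts of the $h_{ij}$ really correspond to the leading forms $P_i$ evaluated on $\mathcal H^n$, and handling degenerate cases cleanly (e.g.\ if some $h_{ij}\equiv 0$, the corresponding hypersurface is all of $\mathbb P^{nd}$ and contributes nothing; if some $P_i$ vanishes identically on $\mathcal H^n$, the non-degenerateness assumption still forces the remaining leading forms to suffice). The algebro-geometric input is standard and requires $k$ to be algebraically closed only to guarantee that the projective intersection contains a $k$-point.
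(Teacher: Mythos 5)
Your proof is correct and follows essentially the same strategy as the paper's: choose bases for $\mathcal H$ and $\mathcal H'$, reduce to $ne\leqslant nd$ scalar polynomial equations over $k$ in $nd$ unknowns, homogenize, invoke the fact that at most $N$ forms of positive degree in $N+1$ variables over an algebraically closed field have a nontrivial common zero, and then use non-degeneracy of the leading forms to rule out the solution lying at infinity. The one cosmetic difference is that you homogenize each $h_{ij}$ directly to degree $d_i=\deg p_i$, which cleanly absorbs possible degree drops in the $h_{ij}$; the paper instead first argues separately (using the same Hauptidealsatz-type count) that no such degree drop can occur and, as a byproduct, that $\dim\mathcal H=\dim\mathcal H'$, before homogenizing.
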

\begin{proof}
Choose bases $b_1,\ldots, b_d$ of $\mathcal H$ and $b_1',\ldots, b_e'$ of $\mathcal H'$,  and obtain  $$p_i(a)=\sum_{j=1}^e h_{ji}(a)b_j',$$ where all $h_{ji}$ are classical polynomials over $k$, when expressed in the coefficients $\lambda_{k\ell}$ of $a\in\mathcal H^n$ with respect to $b_1,\ldots,b_d$. 

Now setting all $p_i$ to zero on $\mathcal H^n$ means setting all $h_{ji}$ to zero, i.e.\ we solve a system of $ne\leqslant nd$ many polynomial equations in $nd$ variables. The homogeneous parts of degree $\deg(p_i)$ of each  $h_{ji}$ must be nonzero, because otherwise we had strictly less than $nd$ of these nontrivial  homogeneous equations, which would  admit a nontrivial solution over $k$, see for example \cite{shaf}, Chapter 1, Section 6. This would give rise to a common zero of the leading forms of the $p_i$ in $\mathcal H^n\setminus\{0\}$, which we have excluded.  So all of our $ne$ many  polynomial equations have positive degree (we also see here that non-degeneratenes in fact implies $\dim(\mathcal H)=d=e=\dim(\mathcal H')$).

After homogenizing the whole system with an additional variable, we have $nd$ many (nonconstant) equations in $nd+1$ variables, and this system has a nontrivial solution over $k$, as above. But the value of the additional variable in such a solution must be nonzero, because otherwise we again had  a common zero   in $\mathcal H^n\setminus\{0\}$ of the leading forms. Thus we can assume that the additional variable takes the value $1$, and this gives rise to a common zero of the $p_i$ in $\mathcal H^n$, as desired. 
\end{proof}

An obvious corollary of \Cref{thm_complex} is the following:
\begin{corollary}
Let  $\mathcal A$ be a finite-dimensional algebra over the algebraically closed field $k$, and let $p_1,\ldots, p_n\colon\mathcal A^n\to\mathcal A$ be polynomial maps of positive degree, whose leading forms are non-degenerate on  $\mathcal A^n$. Then  $p_1,\ldots, p_n$ have a common zero in $\mathcal A^n$.
\end{corollary}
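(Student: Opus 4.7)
The plan is to apply \Cref{thm_complex} directly with the choice $\mathcal H = \mathcal H' = \mathcal A$. Because $\mathcal A$ is assumed finite-dimensional, $\mathcal H = \mathcal A$ is a legitimate finite-dimensional subspace to feed into the theorem, which is the only reason the general statement needed a proper subspace in the first place.

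I would then verify the two hypotheses of \Cref{thm_complex}. First, non-degenerateness of the leading forms on $\mathcal H^n = \mathcal A^n$ is exactly the assumption of the corollary. Second, the containment $p_i(\mathcal A^n) \subseteq \mathcal A$ is tautological, so taking $\mathcal H' = \mathcal A$ gives $\dim(\mathcal H') = \dim(\mathcal A) = \dim(\mathcal H)$, satisfying the required dimension inequality (with equality, in fact).

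With both hypotheses met, \Cref{thm_complex} yields a common zero of $p_1, \ldots, p_n$ in $\mathcal H^n = \mathcal A^n$, which is the conclusion. There is no real obstacle here; the corollary is essentially a restatement of \Cref{thm_complex} in the special case where the ambient algebra is already finite-dimensional, so that no extra bookkeeping is required to locate the subspaces $\mathcal H$ and $\mathcal H'$.
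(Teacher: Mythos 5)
Your proof is correct and is precisely the intended derivation: the paper labels this ``an obvious corollary'' and gives no proof, leaving exactly the instantiation $\mathcal H = \mathcal H' = \mathcal A$ in \Cref{thm_complex} that you spell out. Nothing more to add.
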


Our  next main result is a version for real closed fields.
\begin{theorem}\label{thm_real} Let  $\mathcal A$ be an algebra over the real closed field $k$, and let $$p_1,\ldots, p_n\colon\mathcal A^n\to\mathcal A$$ be  polynomial maps of odd degree.
Assume $\mathcal H\subseteq\mathcal A$ is a finite-dimensional subspace, such that  the leading forms of the $p_i$ are non-degenerate on $\mathcal H$ (i.e.\ they have no common zero in $\mathcal H^n\setminus\{0\}$). Further assume there exists another subspace $\mathcal H'\subseteq\mathcal A$ with $\dim(\mathcal H')\leqslant \dim(\mathcal H)$ and $$p_i(\mathcal H^n)\subseteq \mathcal H' \quad\mbox{for } i=1,\ldots, n.$$ Then $p_1,\ldots, p_n$ have a common zero in $\mathcal H^n$.
\end{theorem}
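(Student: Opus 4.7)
The plan is to imitate the proof of Theorem \ref{thm_complex}: expand in bases of $\mathcal H$ and $\mathcal H'$ to obtain a classical polynomial system
\[
p_i(a) = \sum_{j=1}^{e} h_{ji}(a)\, b'_j
\]
with $h_{ji} \in k[\lambda_{k\ell}]$, giving $ne$ equations in $nd$ scalar variables. Each $h_{ji}$ has degree at most $d_i := \deg(p_i)$, which is odd, and its degree-$d_i$ homogeneous part $H_{ji}$ corresponds to the leading form of $p_i$, so the system $\{H_{ji}\}_{j,i}$ has only the trivial common zero in $k^{nd}$. The algebraically-closed argument used in the proof of Theorem \ref{thm_complex} to force $e = d$ must be replaced: the available substitute over a real closed field is that $m < N$ homogeneous polynomials of odd degrees in $N$ variables always have a nontrivial common zero, which over $\mathbb{R}$ is a direct consequence of Borsuk--Ulam applied to the odd map $S^{N-1} \to \mathbb{R}^m$ they define, and which transfers to arbitrary real closed $k$ by Tarski--Seidenberg. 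Applied here this forces $e = d$ and in fact every $H_{ji}$ nonzero, so we are reduced to a square system of $N = nd$ equations whose $\alpha$-th component $f_\alpha \in k[x_1,\ldots,x_N]$ has an exact odd degree $d_\alpha$, with leading forms $F_\alpha$ sharing only the trivial common zero.

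The core step is then the following classical statement: for polynomials $f_1,\ldots,f_N\colon k^N\to k$ of odd degrees $d_1,\ldots,d_N$ whose leading forms $F_\alpha$ share only the trivial common zero, the map $f = (f_\alpha)_\alpha$ has a zero in $k^N$. Over $\mathbb R$ this is a degree computation: $F$ is odd componentwise, so $F/\|F\|\colon S_R^{N-1}\to S^{N-1}$ has odd (hence nonzero) Brouwer degree by Borsuk's theorem for every $R>0$. Introducing the per-component rescaling
\[
\phi_\lambda(x)_\alpha = \lambda^{-d_\alpha} f_\alpha(\lambda x) = F_\alpha(x) + \sum_{k < d_\alpha} \lambda^{k-d_\alpha}\, (f_\alpha)_k(x),
\]
one sees $\phi_\lambda \to F$ uniformly on any fixed ball $B_R$ as $\lambda \to \infty$, so for $\lambda$ large enough $\phi_\lambda|_{S_R}$ is homotopic to $F|_{S_R}$ through non-vanishing maps and hence has the same nonzero degree; this forces $\phi_\lambda$ to have a zero $x_\lambda$ in $B_R$, and $\lambda x_\lambda$ is the desired zero of $f$. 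This statement is first-order in the coefficients of the $f_\alpha$ (non-degeneracy of the $F_\alpha$ is expressible as $\forall y\,(F(y)=0\Rightarrow y=0)$), so Tarski's transfer principle extends it from $\mathbb R$ to all real closed fields $k$, and combining with the reduction above finishes the proof.

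The main technical nuisance is that the leading forms $F_\alpha$ may have different degrees, so the naive linear homotopy $tF + (1-t)f$ need not stay nonzero on any fixed sphere of large radius; the per-component rescaling $\phi_\lambda$ is precisely what fixes this by normalizing each $F_\alpha$ to its natural scale. An alternative, alluded to in the introduction, is to carry out the same degree-theoretic argument directly on the $nd$-dimensional $k$-vector space $\mathcal H^n$, avoiding an explicit choice of bases but requiring the same Borsuk input and the same Tarski transfer to arbitrary real closed fields.
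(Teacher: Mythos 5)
Your proof is correct, and it is a genuine hybrid of the paper's two arguments. The paper gives two proofs of this theorem. The first proof performs exactly your reduction — choose bases of $\mathcal H$ and $\mathcal H'$, rewrite the system as $ne\leqslant nd$ classical polynomial equations $h_{ji}$ over $k$ in $nd$ scalar variables, and observe that the top-degree parts inherit non-degeneracy — and then simply \emph{cites} the classical fact (Shafarevich, Thm.\ 4.3) that a square real system of odd degrees with non-degenerate leading forms has a solution, together with Tarski transfer. The second proof skips the coordinate reduction entirely and runs a Borsuk/degree argument directly on the normalized map $p^{(t)}(a)=p(ta)/\Vert p(ta)\Vert\colon\mathbb S\to\mathbb S'$, concluding via Borsuk's antipodal theorem. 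You take the first proof's reduction but then \emph{reprove} the classical input via degree theory rather than citing it, which is the content of the second proof transplanted to the scalar system.

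Two points where your write-up is actually more careful than the paper's text. First, you use a Borsuk--Ulam argument (an odd map $S^{N-1}\to\mathbb R^{m}$ with $m<N$ must vanish) to force $e=d$ and to rule out any $h_{ji}$ having vanishing top-degree part; the paper's first proof only says ``proceed exactly as in \Cref{thm_complex}'', where the analogous step uses the algebraically closed projective-dimension count, and the real substitute is left implicit. Second, your per-component rescaling $\phi_\lambda(x)_\alpha=\lambda^{-d_\alpha}f_\alpha(\lambda x)$ correctly handles the case where the $d_\alpha=\deg(p_i)$ are not all equal; the paper's second proof asserts that $p^{(t)}$ ``clearly defines a homotopy between $p^{(0)}$ and $p^{\infty}$'', but when the $\deg(p_i)$ differ, the globally normalized $p(ta)/\Vert p(ta)\Vert$ does not converge uniformly to $p^{\max}/\Vert p^{\max}\Vert$ (the highest-degree component dominates the denominator), and one needs either your component-wise normalization or a short additional homotopy through the positive diagonal rescaling to repair the claim. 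So your argument is valid and in fact slightly more self-contained and more carefully stated on the mixed-degree issue.
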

\begin{proof}[1st proof of \Cref{thm_real}]
One proceeds exactly as in the proof of \Cref{thm_complex}. This time one uses Theorem 4.3 in \cite{shaf} to obtain a solution of the derived polynomial equations over $k$. Note that the result is stated only for the field of real numbers there, but it immediately transfers to any real closed field by Tarski's Transfer Principle (see for example \cite{pd}). 
\end{proof} 

The above used argument on real solutions of  polynomial equations of odd degree relies on B\'{e}zout's Theorem. But it can also be proven with methods from algebraic topology. Since this proof applies directly in our more general setup, we include it here as an alternative. It is also the approach used in \cite{eil, neth} and a standard proof of the fundamental theorem of algebra. Note that it is again enough to prove the result for $k=\mathbb R$, since it transfers to any real closed field by Tarski's Transfer Principle.

\begin{proof}[2nd proof of \Cref{thm_real}]
Assume for contradiction that the polynomial maps $p_i$ do not have a common zero in $\mathcal H^n$. Then the continuous map 
\begin{align*}
p\colon \mathcal H^n&\to \mathcal H'^n \\ a &\mapsto (p_1(a),\ldots, p_n(a)) 
\end{align*}
does not attain the value zero. Thus for $t\in[0,\infty)$ we can consider the well-defined map 
\begin{align*}
p^{(t)}\colon \mathbb{S}&\to  \mathbb S' \\ a &\mapsto \frac{p(ta)}{\Vert p(ta)\Vert} 
\end{align*}
where $\Vert\cdot\Vert$ is any norm on the real vector space $\mathcal H'^n,$ and $\mathbb S,\mathbb S'$ are the unit spheres (choose an arbitrary norm in $\mathcal H^n$ as well). Denote the leading form of $p_i$ by $p_i^{\max}$, consider the map $p^{\max}=(p_1^{\max},\ldots, p_n^{\max})$ and 
\begin{align*}
p^{\infty}\colon \mathbb{S}&\to  \mathbb S' \\ a &\mapsto \frac{p^{\max}(a)}{\Vert p^{\max} (a)\Vert}, 
\end{align*}
which is also well-defined, since the leading forms are non-degenerate. The family $p^{(t)}$ clearly defines a homotopy between $p^{(0)}$ and $p^{\infty}$. Since $p^{(0)}$ is constant, it has topological degree $0$. On the other hand, $p^{\infty}$ is an odd map, which by Borsuk's Antipodal Theorem  has odd degree, see for example \cite{tom}, Theorem 10.6.3. This is a contradiction.
\end{proof}

\begin{remark} ($i$) The non-degenerateness of the leading forms is necessary. Consider the algebra $\mathcal A={\rm Mat}_2(k)$ of $2\times 2$-matrices, and the equation $$\left(\begin{array}{cc}1 & 0 \\0 & 0\end{array}\right)X+\left(\begin{array}{cc}1 & 0 \\0 & 1\end{array}\right)=0.$$ The leading form is degenerate, and the polynomial indeed has no zero in $\mathcal A$.

($ii$)
For infinite-dimensional subspaces $\mathcal H$, the above results  fail. For example, in the commutative algebra $\mathcal A=C([-1,1],\mathbb K)$ of continuous functions (where $\mathbb K=\mathbb R$ or $\mathbb C$), the equation $$fx-1=0$$ has no solution, if $f$ has a zero. But if $f$ has only isolated zeros, then the leading form is non-degenerate. 
\end{remark}

For $\mathcal A={\rm Mat}_m(\mathbb C)$ and its real subspace $\mathcal H={\rm Her}_m(\mathbb C)$ of Hermitian matrices, we obtain the main result of \cite{neth}. Note that the condition {\it self-adjoint} means $p(a)^*=p(a^*)$ for all $a\in\mathcal A$, which implies that $p$ maps $\mathcal H$ to itself.

\begin{corollary}[\cite{neth}]
Let $p\colon {\rm Mat}_m(\mathbb C)\to {\rm Mat}_m(\mathbb C)$ be a self-adjoint polynomial map of odd degree, whose leading form is non-degenerate on  ${\rm Her}_m(\mathbb C)$. Then $p$ has a zero  in ${\rm Her}_m(\mathbb C)$.
\end{corollary}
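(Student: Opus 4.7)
The plan is to derive this corollary as an immediate application of \Cref{thm_real} with $n=1$, ground field $k=\mathbb R$ (which is real closed), and underlying algebra $\mathcal A={\rm Mat}_m(\mathbb C)$ viewed as a real algebra of real dimension $2m^2$. The single polynomial map to which the theorem is applied is $p$ itself, and the odd-degree hypothesis of the theorem is given directly.

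For the subspace data, I would take $\mathcal H=\mathcal H'={\rm Her}_m(\mathbb C)$, the real subspace of Hermitian matrices, which is finite-dimensional (of real dimension $m^2$) and trivially satisfies $\dim(\mathcal H')\leqslant\dim(\mathcal H)$. The self-adjointness hypothesis $p(a)^*=p(a^*)$ specializes on Hermitian $a$ to $p(a)^*=p(a)$, so $p(\mathcal H)\subseteq \mathcal H=\mathcal H'$, giving the required image inclusion. The non-degenerateness of the leading form on $\mathcal H$ (with $n=1$, the condition that no nonzero Hermitian matrix is a zero of the leading form) is precisely the remaining hypothesis. Thus \Cref{thm_real} produces a zero of $p$ in $\mathcal H={\rm Her}_m(\mathbb C)$.

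The only point that needs a small sanity check, rather than being a genuine obstacle, is that the notion of polynomial map from \Cref{sec:prel} remains valid when ${\rm Mat}_m(\mathbb C)$ is regarded as an $\mathbb R$-algebra. Since the recursive construction uses only the multiplication, projections, constants from $\mathcal A$, and $k$-linear combinations, any polynomial map over the complex algebra ${\rm Mat}_m(\mathbb C)$ is in particular a polynomial map over ${\rm Mat}_m(\mathbb C)$ viewed as a real algebra, and restriction of scalars preserves both the degree and the leading form. With this observation the corollary reduces completely to \Cref{thm_real}.
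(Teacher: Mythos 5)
Your proposal is correct and matches the paper's approach: the paper presents this corollary immediately after \Cref{thm_real} with exactly the remark that for $\mathcal A={\rm Mat}_m(\mathbb C)$ and $\mathcal H={\rm Her}_m(\mathbb C)$ one recovers the main result of \cite{neth}, noting that self-adjointness $p(a)^*=p(a^*)$ forces $p(\mathcal H)\subseteq\mathcal H$. Your choice $k=\mathbb R$, $n=1$, $\mathcal H=\mathcal H'={\rm Her}_m(\mathbb C)$, together with the sanity check that restriction of scalars turns a polynomial map over the complex algebra into one over the real algebra without changing degree or leading form, is precisely the intended reduction.
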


We also obtain a multivariate generalization of the odd-degree case of the fundamental theorem of algebra for quaternions/octonions \cite{eil,roo}. We will elaborate on this in more detail in the following section.

\begin{corollary}\label{cor:oct} Let $\mathcal A\in\{\mathbb H,\mathbb O\}$ be the real quaternion or octonion algebra. Let   \linebreak  $p_1,\ldots, p_n\colon \mathcal A^n\to\mathcal A$ be polynomial maps of odd degree, whose leading forms are non-degenerate on  $\mathcal A$ (i.e.\  have no common zero in $\mathcal A^n\setminus\{0\}$). Then $p_1,\ldots, p_n$ have a common zero in $\mathcal A^n$.

In particular, if the leading form of a single odd-degree polynomial map $p\colon \mathcal A\to\mathcal A$  consists of one monomial, then $p$ has a zero in $\mathcal A$.
\end{corollary}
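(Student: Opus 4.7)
The plan is to derive both statements directly from \Cref{thm_real} applied with $\mathcal{H}=\mathcal{H}'=\mathcal{A}$. Since $\mathbb{H}$ and $\mathbb{O}$ are finite-dimensional real algebras (of dimension $4$ and $8$), $\mathcal{A}$ is itself a finite-dimensional subspace of itself, and the condition $\dim(\mathcal{H}')\leqslant\dim(\mathcal{H})$ holds trivially with equality, while $p_i(\mathcal{A}^n)\subseteq\mathcal{A}$ is automatic. The non-degeneracy and odd-degree assumptions are exactly what the corollary assumes, so the first statement follows immediately from \Cref{thm_real}.

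For the \emph{in particular} statement, the only additional step is to verify that when $n=1$ and the leading form of $p$ consists of a single monomial, that monomial is automatically non-degenerate on $\mathcal{A}$, i.e.\ has no zero in $\mathcal{A}\setminus\{0\}$. The crucial input here is that $\mathbb{H}$ and $\mathbb{O}$ are composition algebras, equipped with a multiplicative norm satisfying $|xy|=|x||y|$, and consequently have no zero divisors: $xy=0$ forces $x=0$ or $y=0$. (For quaternions this is standard associative division algebra theory; for octonions one uses the alternative/composition structure.)

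Given this, I would argue by induction on the recursive construction of the monomial map. A nonzero monomial of positive degree in one variable is, unwinding the recursion, a finite bracketed product whose leaves are either nonzero constants from $\mathcal{A}$ or occurrences of the variable $a$; if any constant appearing as a leaf were zero, the whole expression would be the zero map, contradicting that the monomial represents the nonzero leading form. Evaluating at any $a\neq 0$ therefore produces a bracketed product of nonzero elements of $\mathcal{A}$, and inductively each internal multiplication node yields a nonzero result by the no-zero-divisor property. Hence the leading form has no nonzero zero, and the first part of the corollary applies to give the required zero of $p$ in $\mathcal{A}$.

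The one step that requires genuine care, rather than bookkeeping, is the inductive argument in the octonionic case: because multiplication in $\mathbb{O}$ is non-associative, one cannot simply rewrite the monomial as an unbracketed product of its leaves, so the induction must respect the bracketing prescribed by the recursive construction. Fortunately, the no-zero-divisor property is purely about binary products, so it applies at each individual multiplication node regardless of the surrounding bracketing, and the induction goes through cleanly.
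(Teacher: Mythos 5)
Your proposal is correct and follows essentially the same route as the paper: the first statement is a direct specialization of \Cref{thm_real} with $\mathcal H=\mathcal H'=\mathcal A$, and the second reduces to observing that $\mathbb H$ and $\mathbb O$ have no zero divisors, so a nonzero monomial evaluated at $a\neq 0$ is nonzero. Your extra care about the bracketing in the octonionic case is well-placed — the paper writes the monomial as $c_1ac_2a\cdots ac_d$, which suppresses the non-associative parenthesization, whereas your induction over the binary multiplication tree makes the appeal to the no-zero-divisor property airtight without ever needing to flatten the product.
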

\begin{proof}
The first statement is clear from \Cref{thm_real}. For the second, note that for $c_i\neq 0$ we have $$c_1ac_2a\cdots ac_d=0$$ only if $a=0$, since $\mathcal A$ is a division algebra. So a single nonzero monomial map is automatically non-degenerate.
\end{proof}

\section{Quaternions and Octonions}\label{sec:quat}
For the real division algebras $\mathbb H, \mathbb O$ of quaternions and octonions, \Cref{cor:oct} provides  solutions to systems of polynomial equations of odd degree. But the fundamental theorem of algebra from \cite{eil} (see also \cite{roo}) provides solutions also in the even-degree case, at least for  univariate  polynomials with a single monomial leading form. We generalize this now to two monomials. We make  use of  the canonical norms on $\mathbb H, \mathbb O$ throughout.

\begin{theorem}\label{thm:quat}
Let $\mathcal A\in\{\mathbb H,\mathbb O\}$  and let $p\colon \mathcal A\to \mathcal A$ be a polynomial map of positive even degree, whose leading form is a sum of at most two monomials, and which is nondegenerate (i.e\ has no zero  in $\mathcal A\setminus\{0\}$). Then $p$ has a zero in $\mathcal A$.
\end{theorem}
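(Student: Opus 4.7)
The plan is to extend the topological argument from the second proof of \Cref{thm_real} to the even-degree two-monomial case, by reducing the normalized leading form to a single monomial (of known nonzero topological degree) through a norm-controlled linear homotopy.

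Assume for contradiction that $p$ has no zero in $\mathcal A$, and write $p^{\max}=M_1+M_2$ (the case $M_2=0$ being covered by \cite{eil,roo}). Exactly as in the second proof of \Cref{thm_real}, the family $p^{(t)}(u)=p(tu)/\|p(tu)\|$, $t\in[0,\infty]$, yields a continuous homotopy of maps $\mathbb S\to\mathbb S$ (with $\mathbb S$ the unit sphere in $\mathcal A$ under the canonical norm) from the constant $p^{(0)}$ to the normalized leading form $p^{\infty}(u)=p^{\max}(u)/\|p^{\max}(u)\|$, so $p^{\infty}$ has topological degree zero. The goal is a contradiction by showing this degree is in fact nonzero.

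Since the canonical norm on $\mathcal A$ is multiplicative and all monomial coefficients are nonzero, each $\|M_i(u)\|$ is constant on $\mathbb S$, equal to some $r_i>0$. Relabel so that $r_1\ge r_2$. The linear homotopy $H_s(u)=M_1(u)+sM_2(u)$, $s\in[0,1]$, avoids zero: the reverse triangle inequality gives $\|H_s(u)\|\ge r_1-sr_2\ge 0$, and equality forces $s=1$, $r_1=r_2$, and $M_1(u)+M_2(u)=0$, contradicting the nondegeneracy assumption. Normalizing $H_s$ therefore gives a homotopy in $\mathbb S$ between the normalized monomial $\widetilde{M_1}:=M_1/\|M_1\|$ (at $s=0$) and $p^{\infty}$ (at $s=1$).

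It remains to show that $\widetilde{M_1}\colon\mathbb S\to\mathbb S$ has nonzero topological degree, which is the key computation underlying the Eilenberg--Niven/Roos arguments \cite{eil,roo}. Writing $c_i=\|c_i\|u_i$ with $u_i\in\mathbb S$ and pulling out the positive real scalar $r_1=\prod\|c_i\|$ (which is central in $\mathcal A$), we get $M_1(a)=r_1\cdot N(a)$ where $N$ is the corresponding monomial with unit coefficients. Contracting each $u_i$ to $1$ along a path in the path-connected space $\mathbb S$ yields a continuous homotopy of $\widetilde{M_1}=N|_{\mathbb S}$ to the $d$-th power map $a\mapsto a^d$ on $\mathbb S$ (well-defined by alternativity in $\mathbb O$), which has degree $\pm d\ne 0$. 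This contradicts $p^\infty$ having degree zero, so $p$ must have a zero in $\mathcal A$. The main obstacle is controlling the linear homotopy $H_s$ away from zero; this is handled by the relabeling $r_1\ge r_2$ together with the multiplicativity of the canonical norm.
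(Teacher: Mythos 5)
Your proposal is correct and follows essentially the same route as the paper: the same linear homotopy between the two-monomial leading form and its dominant monomial (yours is $H_s=M_1+sM_2$, the paper's is $h_t=m_1+(1-t)m_2$, i.e.\ $s=1-t$), with nondegeneracy along the homotopy guaranteed by the multiplicativity of the norm and the relabeling $r_1\ge r_2$, followed by the Eilenberg--Niven computation that the normalized single monomial has degree $\deg(p)\neq 0$. The only cosmetic difference is that you spell out the contraction of the monomial coefficients to $1$, whereas the paper cites \cite{eil,roo} for that last step.
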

\begin{proof}
We proceed as in the topological proof of \Cref{thm_real}, and only need to show that $p^\infty\colon\mathbb S \to\mathbb S$ has non-zero degree. So let $$p^{\max}\colon a\mapsto m_1(a) + m_2(a)$$ be the leading form of $p$, where $m_1,m_2$ are monomial maps. Let $\Vert m_i\Vert$ denote the product of the norms of all coefficients in $m_i$, so that $$\Vert m_i(a)\Vert=\Vert m_i\Vert \cdot \Vert a\Vert^{\deg(p)}$$ holds for all $a\in\mathcal A$. 
We first assume $\Vert m_1\Vert \geqslant \Vert m_2\Vert>0$ and show that for $t\in[0,1]$ the map $$h_t\colon a\mapsto m_1(a) + (1-t)m_2(a)$$ has no zero in $\mathcal A\setminus\{0\}$. For $t=0$ this follows from our assumption.  For $t>0$ and $h_t(a)=0$ we get  $$\Vert m_1\Vert \Vert a\Vert ^{\deg(p)}=\Vert m_1(a)\Vert=\Vert (t-1)m_2(a)\Vert= (1-t)\Vert m_2\Vert \Vert a\Vert^{\deg(p)}.$$ For $a\neq 0$ this contradicts  $\Vert m_1\Vert \geqslant \Vert m_2\Vert>0$ and thus proves the claim.

So  $h_t$ can be restricted to $\mathbb S$ and normalized, and thus provides a homotopy between $p^{\infty}$ and the map $$a\mapsto \frac{m_1(a)}{\Vert m_1(a)\Vert}.$$ We have thus reduced our problem to the case of a single monomial leading form. It was shown in \cite{eil} that such maps are homotopic to $$a\mapsto a^{\deg(p)}$$ which has degree $\deg(p)\neq 0$ (the proof applies to $\mathcal A=\mathbb O$ as well, see also \cite{roo}).
\end{proof}

Maybe surprisingly, the last result does {\it not} extend to more than two monomials in the leading form.

\begin{example}On $\mathcal A=\mathbb H$ consider the polynomial map $$p\colon a\mapsto c_0a^2+ac_1a+c_2ac_3a + c_4$$ where
$c_0=-1-{\rm i}+{\rm k}, c_1=-1-{\rm i}+{\rm j}-{\rm k}, c_2=-{\rm i}-{\rm j}+{\rm k}, c_3=-1+{\rm i}+{\rm j}+{\rm k}, c_4=6{\rm i}.$
To see that the leading form of $p$ is non-degenerate, we can cancel one copy of $a$ from the right, and consider the linear map $$a\mapsto c_0a+ac_1+c_2ac_3.$$ Its coefficient matrix  in real coordinates $a=a_1+a_2{\rm i} +a_3{\rm j} +a_4{\rm k}$ is  $$M=\left(\begin{array}{cccc}-1 & -1 & 0 & 1 \\-3 & -1 & 1 & 0 \\4 & 3 & -1 & -1 \\-1 & 0 & 1 & -5\end{array}\right),$$ which is nonsingular.

In real coordinates, the full map $p$ has the form
\begin{align*}p\colon \left(\begin{array}{c}a_1 \\a_2 \\a_3 \\a_4\end{array}\right) &\mapsto \left(\begin{array}{c}-a_1^2 + a_2^2 + a_3^2 + 5 a_4^2  + 2 a_1 a_2  - 4 a_1 a_3 - 4 a_2 a_3  + 2 a_1a_4  \\  - 3 a_1^2 - a_2^2  - a_3^2- a_4^2- 2 a_1 a_2  + 2 a_1 a_3 + 4 a_1 a_4 + 4 a_2 a_4 +  4 a_3 a_4 +6\\ 4 a_1^2 + 2 a_1 a_2 - 2 a_1 a_3 + 2 a_1 a_4 - 4 a_2 a_4 \\ a_1^2 + a_3^2+ a_4^2- 4 a_1 a_2 - 3 a_2^2 - 2 a_1 a_3  - 6 a_1 a_4 \end{array}\right). \end{align*} The ideal generated by these four coordinate functions contains the univariate polynomial $$216 + 324 a_4^2 - 927 a_4^4 - 148 a_4^6 + 2578 a_4^8,$$ which is in fact an element  of the reduced Gr\"obner basis with respect to the lexicographic ordering. If this polynomial had a real zero, then the polynomial $$q:=216 + 324x - 927x^2 - 148x^3 + 2578x^4$$ would have a positive real zero. But the solution formula for quartic polynomials shows that $q$ has no real zero at all. Another way to show this is to realize that $q$ is in fact a sum of squares:
$$q=q_1^2+q_2^2+q_3^2$$ where
\begin{align*}
q_1&=-\frac{49212319}{478950 \sqrt{6}}x^2+9 \sqrt{\frac{3}{2}}x+6 \sqrt{6}\\
q_2&= \frac{124008757}{10 \sqrt{188107618886}}x^2+\frac{1}{5} \sqrt{\frac{29456251}{6386}}x\\
q_3&= \frac{\sqrt{\frac{17668412385769586257}{88368753}}}{478950}x^2.
\end{align*}
Now $q$ vanishes at a real point if and only if all three $q_i$ vanish, which is clearly not true for any real point.

So  in total  we have shown that the polynomial $p$ has no zero in $\mathbb H$, although its leading form is non-degenerate.
\end{example}

\begin{remark}
The proofs of \cite{eil} and \Cref{thm:quat}  provide explicit  homotopies of the leading form of $p$ to the form $x^{\deg(p)}$, whose degree was shown to be $\deg(p)$ in \cite{eil}. It is crucial to have only non-degenerate forms along the homotopy. The  multivariate resultant of the four coordinate functions decides non-degenerateness (at least over $\mathbb C$). One might expect that its zero locus divides the space of coefficients of forms into several connected components, and thus homotopies to $x^{\deg(p)}$ cannot be expected in general. This is in fact true, as we conclude from the above example. But since homotopies {\it are} possible in certain simple cases,  the resultant will sometimes show a different behavior.

We illustrate this in a simple example. Consider linear forms of type $$\ell=c_0x+xc_1$$ with $c_0,c_1\in\mathbb H$. When expressing the coefficients as $c_i=c_{i1}+c_{i2}{\rm i}+c_{i3}{\rm j}+c_{i4}{\rm k}$ and the variable as $x=x_1+x_2{\rm i}+x_3{\rm j}+x_4{\rm k}$, the  four coordinate functions of $\ell$ are linear in the $x_i$, and the resultant is thus the determinant of the coefficient matrix 
$$M=\left(
\begin{array}{cccc}
 c_{01}+c_{11} & -(c_{02}+c_{12}) & -(c_{03}+c_{13}) & -(c_{04}+c_{14})\\
 c_{02}+c_{12} & c_{01}+c_{11} & c_{14}-c_{04}& c_{03}-c_{13} \\
 c_{03}+c_{13} & c_{04}-c_{14}& c_{01}+c_{11}& c_{12}-c_{02} \\
 c_{04}+c_{14}& c_{13}-c_{03} & c_{02}-c_{12}& c_{01}+c_{11}
\end{array}
\right).
$$
A direct computation shows that \begin{align*}\det(M)=&(c_{01} + c_{11})^2\left((c_{01} + c_{11})^2 + 2(c_{02}^2 + c_{03}^2 + c_{04}^2  + 
   c_{12}^2 + c_{13}^2 +  c_{14}^2)\right)\\ &+ (c_{02}^2 + c_{03}^2 + c_{04}^2 - c_{12}^2 - c_{13}^2 - c_{14}^2)^2 
   \end{align*} is a sum of squares. Thus the form $\ell$ is degenerate  if and only if $$c_{01} =- c_{11}\ \mbox{ and }\  c_{02}^2 + c_{03}^2 + c_{04}^2 =c_{12}^2 + c_{13}^2 + c_{14}^2.$$ This defines an algebraic subset of the coefficient space $\mathbb R^8$ of codimension $2$, and does not divide it into several connected components. A similar behavior must occur for forms of higher degree, as long as there are at most two monomials. 
   
   For non-degenerate forms of odd degree,  the proof of \Cref{thm_real} shows that the degree of the induced mapping is non-zero, independent of the number of monomials. It does however {\it not} show that the forms are homotopic to $x^{\deg(p)},$ and thus does not imply  that the resultant must show such a behavior.
\end{remark}



\bibliography{bib.bib}
\end{document}